\theoremstyle:=definition,remark,plain\do{%
      \expandafter\g@addto@macro\csname th@\theoremstyle\endcsname{%
        \addtolength\thm@preskip\parskip
        }%
      }
  \newcommand\address[1]{}
  \newcommand\email[1]{}
  \newcommand\dedicatory[1]{}
  \theoremstyle{plain}
  \newtheorem{theorem}{Theorem}[section]
  \newtheorem{proposition}[theorem]{Proposition}
  \newtheorem{corollary}[theorem]{Corollary}
  \newtheorem{lemma}[theorem]{Lemma}
  \theoremstyle{definition}
  \newtheorem*{claim*}{Claim}
  \newtheorem*{question*}{Question}
  \newtheorem*{answer*}{Answer}
  \newtheorem*{application*}{Application}
  \newcommand{\secref}[1]{Section~\ref{Sec:#1}}
  \newcommand{\thmref}[1]{Theorem~\ref{Thm:#1}}
  \newcommand{\corref}[1]{Corollary~\ref{Cor:#1}}
  \newcommand{\lemref}[1]{Lemma~\ref{Lem:#1}}
  \newcommand{\propref}[1]{Proposition~\ref{Prop:#1}}
  \renewcommand{\eqref}[1]{Equation~(\ref{Eq:#1})}
  \newcommand{\Z}{\ensuremath{\mathbf{Z}}\xspace}
  \newcommand{\R}{\ensuremath{\mathbf{R}}\xspace}
  \newcommand{\Q}{\ensuremath{\mathbf{Q}}\xspace}
  \newcommand{\F}{\ensuremath{\mathbf{F}}\xspace}
  \DeclareMathOperator{\scl}{scl}
  \newcommand{\set}[1]{\ensuremath{\left\{ {#1} \right\}}\xspace} % set 
  \newcommand{\abs}[1]{\ensuremath{\left\lvert {#1} \right\rvert}\xspace} % abs 
  \newcommand{\gen}[1]{\ensuremath{\left\langle {#1}
      \right\rangle}\xspace} % generated 
  \newcommand{\st}{\ensuremath{\,\, \colon \,\,}\xspace} % such that 
  \newcommand{\from}{\ensuremath{\colon \thinspace}\xspace} % from
  \newcommand{\B}{\ensuremath{\overline{B}_1}\xspace} % gamma-bar
  \newcommand{\basis}{\ensuremath{\mathcal{B}}\xspace}
  \newcommand{\class}{\ensuremath{\mathcal{C}}\xspace}
  \newcommand{\param}{{\mathchoice{\mkern1mu\mbox{\raise2.2pt\hbox{$
  \centerdot$}}
  \mkern1mu}{\mkern1mu\mbox{\raise2.2pt\hbox{$\centerdot$}}\mkern1mu}{
  \mkern1.5mu\centerdot\mkern1.5mu}{\mkern1.5mu\centerdot\mkern1.5mu}}}
\begin{document}

%section{Title and abstract}

  \title    {Effective quasimorphisms on free chains}
  \author   {Jing Tao\footnote{\small Partially supported by NSF DMS-1311834.}} 
  \date{}

  \maketitle
  \thispagestyle{empty}
  
  \begin{abstract} 

    \noindent We find homogeneous counting quasimorphisms that are
    effective at seeing chains in a free group \F. As corollary, we derive
    that if a group $G$ has an index--$d$ free subgroup, then every element
    $g \in G$ either has stable commutator length at least $1/8d$ or some
    power of $g$ is conjugate to its inverse. We also show that for a
    finitely-generated free group \F, there is a countable basis for the
    real vector space of homogeneous quasimorphisms on \F.
    
    %\blfootnote{\small 2010 \emph{Mathematics Subject Classifications}:
    %20F65, 20E05, 20F12}

  \end{abstract}
  
  %\tableofcontents

\section{Introduction}
  
  Counting quasimorphisms on free groups were first constructed by Brooks
  in \cite{Bro81}. A useful variation of the Brooks' construction was
  introduced by Epstein and Fujiwara in \cite{EF97}. Their variation counts
  the number of disjoint copies of a reduced word in another word (see
  \secref{Chains}). This induces a large family of homogeneous counting
  quasimorphisms on a free group with defect at most 4 \cite{Cal09a}.
  Further, like the Brooks' quasimorphisms, this family \emph{sees} every
  element of \F; that is, for any nontrivial element $ g \in \F$, there is
  a word $w$ whose associated homogeneous counting quasimorphism $\phi_w$
  has $\phi_w(g) \ge 1$. Via Bavard's duality, this yields a lower bound of
  $1/8$ for the stable commutator length (scl) of any nontrivial element in
  a free group.
  
  In this paper, we are interested in quasimorphisms that see chains of a
  free group \F. Let $C_1(\F)$ be the vector space of real $1$--chains of
  \F and let $\overline{C}_1(\F)$ be the quotient of $C_1(\F)$ by the
  subspace spanned by elements of the form $g^n - ng$, and $g-hgh^{-1}$,
  where $g, h \in G$ and $n \in \Z$. Using a lexicographical order on \F,
  we define the notion of \emph{effective words} in \F. Each effective word
  is indivisible (not power of another word) and is a unique representative
  of its own conjugacy class. We show that the set of counting
  quasimorphisms associated to effective words enjoy nice properties. In
  particular, for every nontrivial chain of the form $\sum_i g_i \in
  \overline{C}_1(\F)$, there is an effective word $w$ whose associated
  homogeneous counting quasimorphism $\phi_w$ sees $\sum_i g_i$ (see
  \propref{Chain}). Using Generalized Bavard's Duality \cite{Cal09a} (see
  \thmref{Bavard}), we obtain 
  
  \begin{theorem} \label{Thm:Chains}

    Let \F be any free group and let $g_1,\ldots,g_n \in \F$ be a finite
    sequence such that $\sum_i g_i$ is a $1$--boundary. Then, either
    $\sum_i g_i$ is the zero chain in $\overline{C}_1(\F)$, in which case
    $\scl \left( \sum_i g_i \right) = 0$, or $\scl \left( \sum_i g_i
    \right) \ge 1/8$. 

  \end{theorem}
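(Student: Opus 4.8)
The plan is to apply Generalized Bavard's Duality (\thmref{Bavard}) to translate the scl bound into a statement about homogeneous quasimorphisms, and then feed in \propref{Chain} together with the defect estimate for counting quasimorphisms recalled in the introduction. First I would dispose of the degenerate case: if $\sum_i g_i$ is the zero chain in $\overline{C}_1(\F)$, then because $\scl$ vanishes on each generator $g^n - ng$ and $g - hgh^{-1}$ of the subspace by which $C_1(\F)$ is quotiented, $\scl$ descends to a well-defined function on $\overline{C}_1(\F)$, and hence $\scl\left(\sum_i g_i\right) = 0$ immediately.

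For the main case, I would assume $\sum_i g_i \neq 0$ in $\overline{C}_1(\F)$. Since $\sum_i g_i$ is a $1$--boundary by hypothesis, its stable commutator length is finite and Generalized Bavard's Duality applies, yielding
\[
  \scl\left(\sum_i g_i\right) = \sup_{\phi} \frac{\phi\left(\sum_i g_i\right)}{2\,D(\phi)},
\]
where $\phi$ ranges over homogeneous quasimorphisms of nonzero defect $D(\phi)$ and $\phi\left(\sum_i g_i\right) = \sum_i \phi(g_i)$. By \propref{Chain}, there is an effective word $w$ whose associated homogeneous counting quasimorphism $\phi_w$ sees the chain, so $\phi_w\left(\sum_i g_i\right) \ge 1$. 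Since the counting quasimorphisms in this family have defect $D(\phi_w) \le 4$, restricting the supremum to this single $\phi_w$ gives
\[
  \scl\left(\sum_i g_i\right) \ge \frac{\phi_w\left(\sum_i g_i\right)}{2\,D(\phi_w)} \ge \frac{1}{2 \cdot 4} = \frac{1}{8},
\]
which is the desired bound.

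The substantive work is entirely absorbed into the two cited inputs, so the remaining step is essentially formal, and I do not expect a genuine obstacle. The only points needing care are verifying that $\phi_w$ is an admissible test functional in the duality and that the pairing with the chain is well defined: homogeneity forces $\phi_w(g^n) = n\,\phi_w(g)$ and $\phi_w(hgh^{-1}) = \phi_w(g)$, so $\phi_w$ descends to $\overline{C}_1(\F)$, and since $\phi_w\left(\sum_i g_i\right) \ge 1 > 0$ while homomorphisms vanish on boundaries, $\phi_w$ cannot have zero defect — hence $0 < D(\phi_w) \le 4$ and $\phi_w$ legitimately contributes to the supremum. The genuinely hard content, namely producing an effective word that sees an arbitrary nontrivial chain and controlling its defect, is precisely what \propref{Chain} and the defect estimate supply.
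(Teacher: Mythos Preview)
Your proof is correct and follows exactly the route the paper takes: the paper simply remarks that \propref{Chain}, together with \thmref{Bavard} and the defect bound $D_{\phi_w}\le 4$, yields \thmref{Chains}, and you have spelled out precisely this deduction. Your added checks (that $\scl$ descends to $\overline{C}_1(\F)$ for the degenerate case, and that $\phi_w$ has strictly positive defect so it is admissible in the duality) are correct and make the argument more self-contained than the paper's one-line remark.
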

  
  \thmref{Chains} has applications to the stable commutator lengths in
  virtually free groups. Recall a group is \emph{virtually free} if it
  contains a free subgroup of finite index. Using the finite-index formula
  of \cite{Cal09a} (see \thmref{Formula}), we derive the following
  corollary of \thmref{Chains}. 
  
  \begin{corollary}\label{Cor:VirtuallyFree}

    Let $G$ be a virtually free group with a free subgroup of index $d$.
    Then $\scl(g) = 0$ if and only if there exists $n \in \Z$ such that
    $g^n$ is conjugate to $g^{-n}$. Otherwise, $\scl(g) \ge 1/8d$. 

  \end{corollary}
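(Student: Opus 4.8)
The plan is to deduce Corollary~\ref{Cor:VirtuallyFree} from Theorem~\ref{Thm:Chains} by transferring the scl computation from $G$ down to the finite-index free subgroup $H \le G$ of index $d$, using the finite-index formula of \cite{Cal09a} (cited as Theorem~\ref{Thm:Formula}). The essential idea is that this formula expresses $\scl_G(g)$ in terms of $\scl_H$ of a naturally associated chain built from the cosets of $H$, so the two-valued dichotomy for chains in a free group (zero, or $\ge 1/8$) should pull back to a dichotomy for single elements in $G$, with the index $d$ introducing the factor $1/d$.

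First I would record the precise statement of the finite-index formula: for $H \le G$ of index $d$ and $g \in G$ with $\scl_G(g) < \infty$, one has a relation of the form $\scl_G(g) = \tfrac{1}{d}\,\scl_H\!\left(\sum_i h_i\right)$, where the chain $\sum_i h_i$ in $\overline{C}_1(H)$ is obtained from the action of $\langle g\rangle$ on the $d$ cosets $H\backslash G$: writing the permutation induced by $g$ as a product of cycles, each cycle of length $\ell$ contributes a representative $h_i \in H$ (a suitable $\ell$-th ``return'' power of $g$ conjugated into $H$). Since $H$ is free, Theorem~\ref{Thm:Chains} applies directly to the chain $\sum_i h_i$ living in $\overline{C}_1(H)$.

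Next I would run the dichotomy. By Theorem~\ref{Thm:Chains}, either $\sum_i h_i$ is the zero chain in $\overline{C}_1(H)$, forcing $\scl_H(\sum_i h_i)=0$ and hence $\scl_G(g)=0$; or $\scl_H(\sum_i h_i) \ge 1/8$, whence $\scl_G(g) \ge 1/8d$. This already gives the quantitative ``$\scl(g) \ge 1/8d$ otherwise'' clause, so the remaining work is to identify precisely when the first case occurs, i.e.\ to show that $\sum_i h_i = 0$ in $\overline{C}_1(H)$ is equivalent to the condition that some power $g^n$ is conjugate to $g^{-n}$ in $G$. The reverse direction is the easy one: if $g^n$ is conjugate to $g^{-n}$, then $\scl_G(g^n) = \scl_G(g^{-n})$ combined with homogeneity $\scl_G(g^{-n}) = \scl_G(g^n)$ and the sign-reversal under inversion forces $\scl_G(g) = 0$ (an element conjugate to its own inverse is a torsion-like obstruction that kills scl).

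The main obstacle I expect is the forward direction of the characterization: unwinding exactly what the vanishing of the transferred chain $\sum_i h_i$ in $\overline{C}_1(H)$ means back in $G$. Recall $\overline{C}_1(H)$ is the quotient of $C_1(H)$ by the relations $h^m - mh$ and $h - khk^{-1}$, so the chain is zero iff the multiset of conjugacy classes of the $h_i$ is invariant under the involution $h_i \mapsto h_i^{-1}$ in a weighted sense (each class must be matched with its inverse class). I would argue that this matching of cycle-representatives with their inverses, combined with the cycle structure of $g$ acting on $H\backslash G$, is precisely encoded by the existence of an $n$ with $g^n$ conjugate to $g^{-n}$; tracking the coset-permutation bookkeeping and confirming that the conjugating element can be taken in $G$ (not merely in $H$) is the delicate part, and is where I would spend most of the care in the write-up.
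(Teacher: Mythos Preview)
Your overall strategy matches the paper's: apply the finite-index formula to write $\scl_G(g) = \tfrac{1}{d}\scl_H\bigl(\sum_i h_i\bigr)$ and invoke \thmref{Chains} for the dichotomy, giving the bound $1/8d$. The easy direction of the characterization is also fine (though your ``sign-reversal'' phrasing is off: $\scl$ is nonnegative; the clean argument is that any homogeneous quasimorphism vanishes on an element with a power conjugate to its inverse).

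There is, however, a real gap in the forward direction, exactly where you flag the difficulty. Working with $H$, the $h_i$ are $G$-conjugate to powers $g^{\ell_i}$ where the $\ell_i$ are the cycle lengths of $g$ on $H\backslash G$, and these need not coincide. If $\sum_i h_i = 0$ in $\overline{C}_1(H)$, the cancellation yields some indivisible root of an $h_i$ that is $H$-conjugate to the inverse of the root of some $h_j$; translated to $G$ this says $g^a$ is conjugate to $g^{-b}$ for positive integers $a,b$, but not obviously with $a=b$. Closing this requires an extra ingredient you have not supplied (e.g.\ stable translation length in hyperbolic groups forces $|a|=|b|$). Incidentally, your stated worry is backwards: $H$-conjugacy automatically implies $G$-conjugacy; the genuine obstacle is matching the exponents.

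The paper's fix is a single idea you are missing: for the characterization, pass from $H$ to a \emph{normal} finite-index free subgroup $N \triangleleft G$ contained in $H$. Since the associated cover is regular, every $h_i$ is $G$-conjugate to the \emph{same} power $g^\ell$, and all $|h_i|_C$ agree. The chain analysis (via \lemref{Effective} and \lemref{Equal}) then produces some $h_1$ that is $N$-conjugate to an $h_j^{-1}$, and equality of conjugacy lengths forces the root-multiplicities to match, so $h_1$ is conjugate to $h_j^{-1}$ on the nose and hence $g^\ell$ is conjugate to $g^{-\ell}$. The normality trick is precisely what dissolves the ``delicate bookkeeping'' you anticipated.
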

  
  Since virtually free groups are $\delta$--hyperbolic, it already follows
  from \cite{EF97} (see also \cite{CF10}) that there is always a positive
  lower bound for the scl of every element which does not have a power that
  is conjugate to its own inverse. Their lower bound depends only on
  $\delta$ but it is not explicit. The advantage of \corref{VirtuallyFree}
  is the explicit lower bound relating the index of the free subgroup. 
  
  Finally, we construct a basis for the real vector space of homogeneous
  quasimorphisms on \F using effective words. More precisely, 

  \begin{theorem} \label{Thm:Basis}
    
    For every free group \F, there is a collection of effective words $B
    \subset \F$ such that the associated set $\set{\phi_w}_{w \in B}$ of
    homogeneous counting quasimorphisms is linearly independent. Further,
    for every homogeneous quasimorphism $\phi$ on \F, there are real
    numbers $\set{r_w}_{w \in B} \subset \R$ such that $\phi = \sum_{w \in
    B} r_w \phi_w$. 

  \end{theorem}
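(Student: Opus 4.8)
The plan is to deduce both the linear independence and the spanning statement from a single combinatorial feature of the matrix of pairings. List the effective words as $w_1, w_2, \ldots$, ordered first by the length $|w_i|$ of their cyclically reduced spelling and then lexicographically, and consider the infinite matrix $M = [\phi_{w_i}(w_j)]$. The heart of the argument is the following triangularity property of $M$: for effective words $w,v$ one has $\phi_w(v)=0$ whenever $|w|>|v|$; one has $\phi_w(v)=0$ also when $|w|=|v|$ and $w\neq v$; and $\phi_w(w)=1$. In other words, $M$ is upper unitriangular in the chosen order. I will first grant this and explain how the theorem follows, then indicate how to prove it.

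For the independence statement it suffices to treat an arbitrary finite set of effective words. Ordering them by length and then lexicographically and restricting $M$ to the corresponding rows and columns produces an upper-triangular matrix with $1$'s on the diagonal, hence of determinant $1$. Consequently, if $\sum_i c_i \phi_{w_i}=0$ is a finite linear relation, evaluating on each $w_j$ in the set yields a homogeneous linear system governed by this invertible matrix, forcing every $c_i=0$. Thus $\set{\phi_w}_{w\in B}$ is linearly independent, where $B$ is the full set of effective words.

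For the spanning statement, fix a homogeneous quasimorphism $\phi$ and define coefficients $r_w$ by recursion on length: having chosen $r_u$ for all effective $u$ with $|u|<|w|$, set $r_w = \phi(w) - \sum_{|u|<|w|} r_u\,\phi_u(w)$, using that the diagonal entry is $1$. Since there are only finitely many effective words of each length, each $r_w$ is well defined. The crucial observation is that the series $\sum_{w\in B} r_w\phi_w$ converges pointwise for a trivial reason: for any $g\in\F$ the triangularity property gives $\phi_w(g)=0$ for every effective $w$ longer than the cyclic length of $g$, so only finitely many terms are nonzero at $g$. By construction the partial sum over $\set{w:|w|\le L}$ agrees with $\phi$ on every effective word of length $\le L$, and the omitted longer terms vanish there; hence $\sum_w r_w\phi_w$ and $\phi$ agree on every effective word. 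Because a homogeneous quasimorphism is determined by its values on the effective words — every element has a unique primitive root, $\phi$ is conjugacy invariant and satisfies $\phi(g^{-1})=-\phi(g)$, and the effective words furnish one representative per inverse-pair of primitive conjugacy classes — the two functions coincide, giving $\phi=\sum_{w\in B} r_w\phi_w$.

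The main work is therefore the triangularity property, and this is where effectiveness is essential. The vanishing for $|w|>|v|$ reduces to the claim that an effective word (and likewise its inverse) cannot occur as a subword of $v^{\infty}$ once it is longer than $v$. Any subword of $v^{\infty}$ of length exceeding $|v|$ admits a nontrivial period at most $|v|$, equivalently a nonempty proper border; but a lexicographically least rotation of a primitive word — which is precisely what effectiveness provides — is unbordered, hence aperiodic, so no such occurrence exists. The equal-length case is analogous: a subword of $v^{\infty}$ of length $|v|$ is a cyclic rotation of $v$, and among the rotations of a primitive word only the effective one is lex-least, forcing $w=v$. Finally $\phi_w(w)=1$ because, $w$ being unbordered, its occurrences in $w^{\infty}$ are exactly the periodic ones with no overlaps, one per period, while $w^{-1}$ does not occur at all, as $w$ is effective and hence not conjugate to $w^{-1}$. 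I expect the only delicate points to be confirming that the paper's notion of effective word does yield the unbordered, primitive, non-reversible representatives used here, and carefully tracking the $w\leftrightarrow w^{-1}$ sign in the counting quasimorphism; the period-versus-border dichotomy is the decisive structural input.
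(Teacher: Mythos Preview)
Your overall approach---encode the problem in the pairing matrix $M=[\phi_{w_i}(w_j)]$, establish upper-unitriangularity, and define the coefficients $r_w$ recursively---is exactly the paper's strategy (Lemmas~\ref{Lem:Effective}, \ref{Lem:Equal}, \ref{Lem:Finite} and Proposition~\ref{Prop:Basis}). Your Lyndon-word/unbordered argument for the vanishing $\phi_w(v)=0$ when $|w|>|v|$ is a pleasant combinatorics-on-words alternative to the paper's direct manipulation of rotations in Lemma~\ref{Lem:Effective}.

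There is, however, a genuine gap in your choice of $B$. You take $B$ to be ``the full set of effective words'' and later assert that effective words ``furnish one representative per inverse-pair of primitive conjugacy classes.'' They do not: by the paper's definition an effective word is the lex-least cyclic rotation within its own conjugacy class, so one obtains one representative per conjugacy class, not per inverse-pair. If $w$ is effective and $w'$ is the effective representative of the conjugacy class of $w^{-1}$, then $w'$ is a cyclic rotation of $w^{-1}$, whence $\phi_{w'}=\phi_{w^{-1}}=-\phi_w$. This immediately destroys linear independence, and it also breaks your equal-length triangularity: $|w|=|w'|$, $w\ne w'$, yet $\phi_w(w')=-1$. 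This is precisely the case ``$g$ conjugate to $w^{-1}$'' in Lemma~\ref{Lem:Equal}, which your sketch misses by tracking only occurrences of $w$ in $v^\infty$ and forgetting the $C_{w^{-1}}$ term in $\Phi_w$. The paper repairs this by splitting the set $\widehat B$ of all effective words as $B\sqcup\overline B$, keeping exactly one of each inverse-pair in $B$; with that choice your argument goes through.

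A secondary issue: your finiteness justifications---``finitely many effective words of each length'' for the recursion, and ``only finitely many terms are nonzero at $g$'' because longer $w$ vanish---are valid only for finite-rank $\F$, whereas the theorem is stated for arbitrary free groups. In infinite rank there are already infinitely many effective words of length~$1$. The paper handles this via Lemma~\ref{Lem:Finite}: for fixed $g$, any shorter $w$ with $\phi_w(g)\ne 0$ must have $w$ or $w^{-1}$ occurring as a subword of $g^2$, and there are only finitely many such subwords. You need this extra observation to make both the recursion for $r_w$ and the pointwise sum $\sum_w r_w\phi_w(g)$ well defined in general.
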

  
  We remark that the set $B$ has the property that distinct elements
  represent distinct conjugacy classes, and for every indivisible element
  $g \in B$, either the conjugacy class of $g$ or the conjugacy class of
  its inverse is represented in $B$. As corollary of \thmref{Basis}, we
  obtain that for any countable free group, there is a countable basis for
  the real vector space of homogeneous quasimorphisms. However, it is
  essential that infinitely many of the corresponding coefficients $r_w$
  are allowed to be nonzero, as there are homogeneous quasimorphisms that
  are not a finite linear combination of counting quasimorphism. See the
  end of the paper for an example or Examples 2.63 and 2.64 of
  \cite{Cal09a}.

  Finally, we note that there is a sharp lower bound of $1/2$ for the scl
  of nontrivial elements in free groups. This was first shown in
  \cite{DH91}; (see also \cite{LW14} for another proof). The proof for this
  lower bound does not use quasimorphisms. Thus, it is natural to ask if
  the lower bound of \thmref{Chains} can be improved using a different
  argument.
   
  \subsection*{Acknowledgments}
  
  This paper grew out of a joint project with Talia Fern\'os and Max
  Forester. I am grateful to Max for many stimulating conversations related
  to this paper. I also thank Mladen Bestvina for helpful comments and for
  suggesting the example of the last section. Finally, I thank the referee
  for thorough and helpful comments. 
  
\section{Preliminaries}
  
  We refer to \cite{Cal09a} and references within for background on
  quasimorphisms and stable commutator lengths. 

  Let $G$ be any group. A real-valued function $\Phi \from G \to \R$ is a
  \emph{quasimorphism} on $G$ if there exists $D \ge 0$ such that 
  \[ |\Phi(gh) - \Phi(g) - \Phi(h)| \le D, \quad \forall g,h \in G.\]
  The infimum over all such constants $D$ is the \emph{defect} $D_\Phi$ of
  $\Phi$. It is immediate that $\Phi$ is a homomorphisms if and only if
  $D_\Phi$ is $0$. A quasimorphism is \emph{homogeneous} if it is a
  homomorphism on cyclic subgroups of $G$. Homogeneous quasimorphisms are
  (homogeneous) \emph{class functions}, i.e.\ they are constant on
  conjugacy classes of $G$. Given a quasimorphism $\Phi$, the
  \emph{homogenization} of $\Phi$ is defined by \[ \phi(g) = \lim_{n \to
  \infty} \frac{\Phi(g^n)}{n}.\] This is a homogeneous quasimorphism with
  defect $D_{\phi} \le 2D_\Phi$ \cite[Lemma 2.58]{Cal09a}. 
  
  Let $C_n(G)$ be the real vector space freely generated by $n$--tuples of
  elements in $G$ with  boundary map $\partial \from C_n(G) \to C_{n-1}(G)$
  defined on each $n$--tuple by \[ \partial (g_1,\ldots,g_n) = \sum_{i=1}^n
  (-1)^{i-1} (g_1,\ldots,g_{i-1},g_{i+1},\ldots,g_n).\] A quasimorphism on
  $G$ extends linearly to $C_1(G)$ and a homogeneous quasimorphism descends
  to the quotient $\overline{C}_1(G)$ of $C_1(G)$ by the subspace spanned
  by elements of the form $g^n - ng$, and $g-hgh^{-1}$, where $g, h \in G$
  and $n \in \Z$. We denote by $B_1(G)=\partial C_2(G)$ the subspace of
  real $1$--boundaries of $G$ and let $\B(G)$ be the image of $B_1(G)$ in
  $\overline{C}_1(G)$. For simplicity, we will take the Generalized
  Bavard's Duality as the definition of the \emph{stable commutator length}
  (scl) of an element in $\B(G)$. The definition of the scl of a chain
  (Definition 2.71) and a proof of the duality theorem (Theorem 2.79) can
  be found in \cite{Cal09a}. 

  \begin{theorem}[Generalized Bavard's Duality] \label{Thm:Bavard}
    
    Let $G$ be any group. Then for any $\sum_i^n r_i g_i \in \B(G)$, 
    \[ \scl \left( \sum_i^n r_i g_i \right) = \sup_\phi \frac{ \sum_i^n
    r_i \phi(g_i)} {2 D_\phi}, \] where $\phi$ ranges over all homogeneous
    quasimorphisms on $G$.
  
  \end{theorem}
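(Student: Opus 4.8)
The plan is to prove the two inequalities separately, taking as the working definition of $\scl$ the surface-theoretic one (Calegari, Definition 2.71): for a chain $c = \sum_i r_i g_i \in \B(G)$ one has $\scl(c) = \inf_S -\chi^-(S)/2n(S)$, where $S$ ranges over admissible surfaces of degree $n(S)$ for $c$ in a $K(G,1)$. Writing $\phi(c) := \sum_i r_i\phi(g_i)$, the goal is the equality $\scl(c) = \sup_\phi \phi(c)/2D_\phi$, with the supremum over homogeneous quasimorphisms of positive defect; homomorphisms (defect zero) contribute nothing, since they vanish on boundaries and $c$ lies in $\B(G)$.

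The easy inequality $\scl(c) \ge \sup_\phi \phi(c)/2D_\phi$ comes from evaluating a fixed homogeneous quasimorphism $\phi$ on an admissible surface. First I would discard disk and sphere components, so that $-\chi^-(S) = -\chi(S)$. The coboundary $\delta\phi$ is a bounded $2$-cocycle with $\|\delta\phi\|_\infty = D_\phi$; pairing it with the relative fundamental class of a triangulated $S$, using that $\phi$ is a class function summing to $n(S)\phi(c)$ along $\partial S$, and crucially using homogeneity of $\phi$ to absorb the boundary contributions (this is precisely what produces $-\chi$ rather than $-2\chi$), one obtains $n(S)|\phi(c)| \le D_\phi(-\chi^-(S))$. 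Dividing by $2n(S)$ and taking the infimum over $S$ gives $|\phi(c)| \le 2 D_\phi\, \scl(c)$, hence the claimed inequality.

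For the hard inequality I would argue by duality. I would first check that $\scl$ is a well-defined pseudonorm on $\B(G)$ and that it agrees, up to the constants built into the definitions, with the stable filling pseudonorm induced by the $\ell^1$-norm on real $2$-chains bounding $c$. Using the Hahn--Banach duality between the Gromov ($\ell^1$) norm on $2$-chains and the sup-norm on bounded $2$-cochains, positivity of $\scl(c) = s$ produces a bounded $2$-cocycle $\omega$ with $\|\omega\|_\infty \le 1$ whose pairing with $c$ realizes $s$. The decisive step is to promote $\omega$ to the coboundary $\delta\Phi$ of a genuine quasimorphism $\Phi$: because $c$ is a boundary and we have passed to the quotient by the relations $g^n - ng$ and $g - hgh^{-1}$, the image of $[\omega]$ in ordinary $H^2(G;\R)$ is irrelevant, so the extremal bounded class may be represented by the coboundary of a quasimorphism. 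Homogenizing $\Phi$ to $\phi$ with $D_\phi \le 2D_\Phi$ and unwinding the constants then gives $\phi(c)/2D_\phi \ge s$, completing the proof.

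The main obstacle is exactly this realization step: translating an extremal bounded $2$-cocycle into the coboundary of a homogeneous quasimorphism while keeping precise track of the constant $2$, which enters both through the homogenization estimate $D_\phi \le 2D_\Phi$ and through the comparison between $-\chi(S)$ and the simplicial volume of $S$. The remaining points---that $\scl$ is a pseudonorm, that admissible surfaces exist for elements of $\B(G)$, and the bookkeeping of boundary contributions in the easy direction---are comparatively mechanical.
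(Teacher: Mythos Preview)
The paper does not prove this theorem at all: it explicitly takes Generalized Bavard Duality as the \emph{definition} of $\scl$ on $\B(G)$ and refers to Calegari's monograph \cite{Cal09a} (Definition~2.71 and Theorem~2.79) for the surface-theoretic definition and the duality proof. So there is nothing in the paper to compare your argument against; your proposal is effectively a sketch of the cited external result.

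That said, your outline tracks Calegari's proof closely and is sound in the easy direction. In the hard direction, your phrasing ``the image of $[\omega]$ in ordinary $H^2(G;\R)$ is irrelevant'' glosses over the real work. It is not that the ordinary class of an arbitrary extremal $\omega$ is irrelevant; rather, one must show that the extremal linear functional on $\B(G)$ produced by Hahn--Banach can be represented by a bounded cocycle lying in the kernel of the comparison map $H_b^2(G;\R)\to H^2(G;\R)$, since that kernel is exactly the image of $Q(G)/H^1(G)$ under $\phi\mapsto[\delta\phi]$. Concretely, Calegari dualizes the Gersten/filling norm on $\B(G)$ and identifies the dual space with $Q(G)/H^1(G)$ equipped with the defect norm; the class in $H^2$ never enters because one works from the start with functionals on boundaries, not on all of $C_1$. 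You have correctly flagged this as the main obstacle, but as written your sentence suggests the obstruction simply vanishes rather than that it is circumvented by the choice of which space Hahn--Banach is applied to.
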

  
  In the sum above, if $n=1$ and $r_1=1$, then we obtain the usual Bavard's
  duality for $\scl(g)$ of an element $g \in [G,G]$.

  We now describe the finite-index formula stated in \cite[Proposition
  2.80]{Cal09a}. Suppose $H$ is a finite-index subgroup of $G$. Let $X$ be
  a space with fundamental group $G$, and let $\widehat{X}$ be the covering
  space associated to the subgroup $H$. Given a loop $\gamma$ representing
  the conjugacy class of $g\in G$, the preimage in $\widehat{X}$ can be
  expressed as a finite union of loops $\widehat{\gamma}_1, \dots,
  \widehat{\gamma}_k$ each covering $\gamma$ with some finite degree.
  Suppose $\widehat{\gamma_i}$ represents the conjugacy class of $h_i \in
  H$. Then the following formula holds. 

  \begin{theorem}[Finite index formula] \label{Thm:Formula}
    \[
      \scl_G(g) \ = \ \frac{1}{[G:H]} \scl_H \left(\sum_{i=1}^k h_i\right).
    \]
  \end{theorem}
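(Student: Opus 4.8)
The plan is to prove the formula using the topological characterization of stable commutator length in terms of admissible surfaces rather than the quasimorphism formula of \thmref{Bavard}, since the covering $p \from \widehat X \to X$ interacts transparently with surfaces. Recall that for a rational chain $c$ represented by loops in $X$ one has $\scl(c) = \inf_S \frac{-\chi^-(S)}{2\, n(S)}$, where $S$ ranges over compact oriented surfaces equipped with a map to $X$ whose boundary wraps the loops of $c$ with total degree $n(S)$, and where one may assume $S$ has no disk or sphere components. The covering $p$ has degree $[G:H]=n$, and I would fix the convention that $\widehat\gamma_i$ covers $\gamma$ with degree $d_i$, so that $\sum_{i=1}^k d_i = n$. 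A useful orienting remark is that, reading the covering off the action of $\gen{g}$ on the coset space $G/H$, the class $h_i$ is conjugate in $G$ to $g^{d_i}$; this keeps the degree bookkeeping below consistent and would also yield the easy direction of a purely quasimorphism-based argument, since a homogeneous class function $\phi$ on $G$ then satisfies $\sum_i \phi(h_i) = \sum_i d_i\, \phi(g) = n\,\phi(g)$.

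For the inequality $\scl_G(g) \le \frac{1}{n}\,\scl_H(\sum_i h_i)$, I would start from an admissible surface $\widehat f \from \widehat S \to \widehat X$ for the chain $\sum_i h_i$ of degree $m$ and simply post-compose with $p$. The resulting map $p \circ \widehat f \from \widehat S \to X$ sends $\partial \widehat S$ to $\gamma$; since the part of the boundary lying over $\widehat\gamma_i$ has degree $m$ there and $\widehat\gamma_i \to \gamma$ has degree $d_i$, the total degree over $\gamma$ is $\sum_i m\, d_i = m n$. As the underlying surface is unchanged, $\chi^-$ is unchanged, so $\scl_G(g) \le \frac{-\chi^-(\widehat S)}{2 m n} = \frac{1}{n}\cdot\frac{-\chi^-(\widehat S)}{2m}$; taking the infimum over $\widehat S$ gives the claim.

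For the reverse inequality I would instead pull back the covering. Given an admissible surface $f \from S \to X$ for $g$ of degree $D$, form the fibre product $\widehat S = S \times_X \widehat X$, an $n$--fold cover of $S$, with its induced map $\widehat f \from \widehat S \to \widehat X$. Then $\chi(\widehat S) = n\,\chi(S)$, and after discarding disk and sphere components one checks $\chi^-(\widehat S) = n\,\chi^-(S)$. The key point to verify is that $\widehat S$ is admissible for $\sum_i h_i$ of degree exactly $D$: for each boundary circle $c$ of $S$ mapping to $\gamma$ with degree $e_c$, the pullback of $\widehat\gamma_i \to \gamma$ along $c \to \gamma$ has degree $e_c$ over $\widehat\gamma_i$, so summing over all boundary circles gives total degree $\sum_c e_c = D$ over each $\widehat\gamma_i$. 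Hence $\scl_H(\sum_i h_i) \le \frac{-\chi^-(\widehat S)}{2D} = n\cdot \frac{-\chi^-(S)}{2D}$, and taking the infimum over $S$ yields $\frac{1}{n}\,\scl_H(\sum_i h_i) \le \scl_G(g)$. Combining the two inequalities proves the formula.

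I expect the main obstacle to be the degree bookkeeping in the pullback direction: confirming that $\partial \widehat S$ wraps each $\widehat\gamma_i$ with the correct total degree $D$, with no multiplicities lost when $\partial S$ has several components of differing degrees, relies on the fact that a fibre product preserves the covering degree of the map being pulled back. Alongside this is the routine but necessary check that passing to $\chi^-$ — that is, controlling the disk and annulus/sphere components of the cover — does not spoil the multiplicativity $\chi^-(\widehat S) = n\,\chi^-(S)$. I would also confirm at the outset that $\sum_i h_i$ is a boundary in $H$ precisely when $g$ is a boundary in $G$, so that both sides are defined; this likewise follows from the surface correspondence used above.
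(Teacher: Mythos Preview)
The paper does not actually prove \thmref{Formula}; it is quoted from \cite[Proposition~2.80]{Cal09a} and stated without proof as background. So there is no ``paper's own proof'' to compare against.

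That said, your outline is the standard topological argument one finds in Calegari's book: compare admissible surfaces in $X$ and $\widehat X$ via post-composition in one direction and the fibre product $\widehat S = S \times_X \widehat X$ in the other. The degree bookkeeping in the pullback step is correct as you describe: for a boundary circle $c\to\gamma$ of degree $e_c$, the fibre product $c\times_\gamma \widehat\gamma_i$ covers $\widehat\gamma_i$ with total degree $e_c$ (regardless of how it decomposes into components), so summing over $c$ gives total degree $D$ on each $\widehat\gamma_i$. The equality $\chi^-(\widehat S)=n\,\chi^-(S)$ is also fine once one notes that a finite cover of a component with $\chi<0$ has all components with $\chi<0$, and covers of annuli are annuli; so the non-negative pieces of $S$ lift to non-negative pieces and nothing is lost. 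Your remark that $\sum_i h_i$ lies in $\B(H)$ iff $g$ lies in $\B(G)$ is exactly the transfer map on $H_1$, and the surface correspondence makes this transparent. The proposal is a correct sketch of the standard proof.
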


\section{Effective words}
  
  \label{Sec:Chains}

  Let \F be a free group, possibly with infinite rank. Fix a well-ordered,
  symmetric, free generating set for \F and equip \F with the
  lexicographical order. Let $|w|$ be the word length of $w \in \F$. By
  \emph{conjugacy length} $|w|_C$ of $w$ we will mean the word length of a
  cyclically reduced representative of the conjugacy class of $w$.  For
  each cyclically reduced word $w \in \F$, let $[w]$ be the set of
  conjugates of $w$ obtained by cyclically permuting the letters of $w$.
  Note that $[w]$ is a finite set. We will call a word $w \in \F$
  \emph{effective} if $w$ is indivisible and cyclically reduced, and it has
  minimal lexicographical order among all elements in $[w]$. For any
  indivisible element $g \in \F$, we will say $w$ is an \emph{effective
  representative} of $g$ if $w$ is conjugate to $g$ and $w$ is effective. 
  
  Let $w$ be a reduced word. For any $g \in \F$, let $C_w(g)$ be the
  maximum number of disjoint copies of $w$ in the reduced representative of
  $g$. Define
  \[ \Phi_w(g) = C_w(g) - C_{w^{-1}}(g).\]
  By \cite[Proposition 2.30]{Cal09a}, $\Phi_w$ is a quasimorphism with
  defect at most $2$, thus its homogenization $\phi_w$ has defect at most
  $4$.
  
  \begin{lemma} \label{Lem:Effective}
    
    If $w \in \F$ is effective, then $\phi_w(g) = 0$ for any $g \in \F$
    with $|g|_C < |w|$.
    
  \end{lemma}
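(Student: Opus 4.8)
The plan is to reduce the statement to a purely combinatorial fact about periodic words. Since $\phi_w$ is homogeneous it is a class function, so I may replace $g$ by a cyclically reduced conjugate; thus I assume $g$ is cyclically reduced with $|g| = |g|_C < |w|$. Writing out the homogenization, $\phi_w(g) = \lim_{n\to\infty} \Phi_w(g^n)/n = \lim_{n\to\infty}\bigl(C_w(g^n) - C_{w^{-1}}(g^n)\bigr)/n$, and because $g$ is cyclically reduced the reduced representative of $g^n$ is the literal concatenation of $n$ copies of $g$, a word of length $n|g|$ that is periodic with period $g$. So it suffices to understand occurrences of $w$ and of $w^{-1}$ as contiguous subwords of this periodic word.

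The central claim I would prove is: \emph{if $w$ is effective and $|w| > |g|$, then $w$ is not a subword of $g^n$ for any $n$}. Suppose instead $w$ occurs in $g^n$. Since $g^n$ has period $|g|$ and $|w| > |g|$, the word $w$ inherits period $|g|$; letting $u$ be the length-$|g|$ prefix of $w$, the occurrence forces $u$ to be a cyclic conjugate of $g$ and $w$ to be a prefix of the infinite periodic word $u^\omega$. Hence $w = u^k u'$ with $k \ge 1$ and $u'$ a proper prefix of $u$. Because $w$ is indivisible and $|w| > |u|$, the suffix $u'$ is nonempty, and the proper suffix $z := u^{k-1} u'$ of $w$ coincides with the length-$|z|$ prefix of $w$. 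A proper prefix is lexicographically smaller than the whole word, so $z < w$; equivalently, the rotation of $w$ beginning where $z$ begins is no larger than $w$. This contradicts that $w$ is the strictly minimal element of its rotation class $[w]$ (strictly, since $w$ is indivisible), that is, it contradicts effectiveness. Phrased in standard terminology, an effective word is exactly a Lyndon word over the symmetric generating alphabet, and a Lyndon word is strictly smaller than each of its proper suffixes, which $z$ is not.

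The same conclusion for $w^{-1}$ follows by inversion rather than by a separate argument: $w^{-1}$ occurs in $g^n$ if and only if $w$ occurs in $(g^{-1})^n$, and $g^{-1}$ is again cyclically reduced with $|g^{-1}| = |g| < |w|$ while $w$ remains effective. Applying the central claim to $g^{-1}$ shows $w^{-1}$ is likewise never a subword of $g^n$. Consequently $C_w(g^n) = C_{w^{-1}}(g^n) = 0$ for all $n$, so $\Phi_w(g^n) = 0$ and $\phi_w(g) = 0$.

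I expect the main obstacle to be the combinatorial heart of the central claim: extracting, from an occurrence of the long word $w$ inside the periodic word $g^n$, a rotation of $w$ that is not larger than $w$. The work lies in correctly using the periodicity (that a subword longer than the period is a prefix of a power of a cyclic conjugate of $g$) together with the minimality defining effectiveness, and in checking that the degenerate possibilities ($u'$ empty, $k=1$, or $w$ a proper power) are excluded precisely by indivisibility and by $|w| > |g|$.
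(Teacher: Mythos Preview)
Your proposal is correct and follows the same overall strategy as the paper: reduce to $g$ cyclically reduced, show that $w$ cannot occur as a subword of $g^n$ by exploiting the induced periodicity, and conclude $C_w(g^n)=C_{w^{-1}}(g^n)=0$. The paper carries out the final contradiction by hand: writing $w=(uv)^{n-1}u$ with $g=uv$, it splits into the cases $uv<vu$ and $vu<uv$ and in each exhibits an explicit cyclic conjugate of $w$ that is smaller than $w$. You instead observe that $w$ acquires a nonempty proper border $z=u^{k-1}u'$ and invoke the classical fact that a Lyndon word is strictly smaller than each of its proper suffixes (hence is unbordered). These are the same lemma packaged two ways; yours is more conceptual, the paper's is self-contained.

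One step in your write-up is not justified as stated. You assert that $z<w$ (because $z$ is a proper prefix) is ``equivalently'' the statement that the rotation of $w$ beginning at the suffix occurrence of $z$ is no larger than $w$. These are not equivalent: for $w=bab$ with border $z=b$ one has $z<w$, yet the corresponding rotation $bba$ is larger than $w$. What actually carries your argument is the sentence you write immediately afterward: since $w$ is Lyndon it is strictly smaller than every proper suffix, so $w<z$, contradicting $z<w$. Drop the ``equivalently'' clause and rest on that Lyndon-suffix property (or, if you want the argument self-contained, replace it by the paper's two-case rotation comparison).
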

  
  \begin{proof}
    
    In this proof, we will denote by $w < w'$ to mean $w$ is smaller than
    $w'$ lexicographically. We may assume $g$ is cyclically reduced, so
    $|g| = |g|_C$. Suppose there exists $n> 0$ such that $C_w(g^n) = 1$.
    Since $|g| < |w|$, we must have $n \ge 2$. By cyclically permuting the
    letters of $g$, we may assume $w = (uv)^{n-1}u$, where $g=uv$. By
    assumption, $g$ is cyclically reduced so $|uv| = |vu|$. We can't have
    $uv = vu$, since this would imply $u$ and $v$ are both powers of the
    same word, which contradicts that $w$ is indivisible. Hence, either $uv
    < vu$ or $vu < uv$. If $uv < vu$, then $uuv < uvu$. But $uvu$ is a
    prefix of $w$ and $uuv$ is a prefix of $uwu^{-1} = u(uv)^{n-1}$, so
    $uwu^{-1} < w$, contradicting the minimality of the order of $w$ in
    $[w]$. On the other hand, if $vu < uv$, then $u^{-1}wu = v(uv)^{n-2}uu
    < w$, since $uv$ is a prefix of $w$ but $vu$ is a prefix of $u^{-1}wu$.
    Again, this contradicts minimality of $w$. We conclude $C_w(g^n) = 0$
    for all $n > 0$. Since this is true for all $g$ with $|g| < |w|$, we
    also have $C_{w^{-1}}(g^n) = C_w( (g^{-1})^n) = 0$, for all $n > 0$.
    This shows $\Phi_w(g^n)=0$ for all $n \in \Z$, whence the result.
    \qedhere
    
  \end{proof}
  
  The following lemma is immediate.

  \begin{lemma} \label{Lem:Equal}
    
    Suppose $w\in \F$ is cyclically reduced and $g\in \F$ has $|w| = |g|_C$.
    Then \begin{itemize}
      \item If $g$ is conjugate to $w$, then $\phi_w(g) = 1$. 
      \item If $g$ is conjugate to $w^{-1}$, then $\phi_w(g) = -1$.
      \item In all other cases, $\phi_w(g) = 0$. 
    \end{itemize}

  \end{lemma}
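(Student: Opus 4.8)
The plan is to compute the homogenization directly, using that $\phi_w$ is a homogeneous class function. Since $\phi_w$ is constant on conjugacy classes, I may replace $g$ by a cyclically reduced representative and assume $|g| = |g|_C = |w| =: \ell$. Then $g^n$ is cyclically reduced of length $n\ell$, and because it is $\ell$--periodic, every length--$\ell$ subword of $g^n$ is one of the cyclic rotations of $g$; equivalently, the length--$\ell$ subwords of $g^n$ are exactly the elements of $[g]$. This periodicity observation is what lets one decide both the presence and the multiplicity of $w$ and $w^{-1}$ inside $g^n$.

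With this in hand I would estimate $C_w(g^n)$. Since each copy of $w$ occupies $\ell$ of the $n\ell$ letters of $g^n$, one has $C_w(g^n) \le n$ for free. If $g$ is conjugate to $w$, then $g$ is a cyclic rotation of $w$, say $g = ba$ with $w = ab$, and the subword of $g^n = (ba)^n$ beginning after the initial $b$ is exactly $(ab)^{n-1} = w^{n-1}$, exhibiting $n-1$ disjoint copies of $w$; hence $n-1 \le C_w(g^n) \le n$. If $g$ is not conjugate to $w$, then $w$ is not a rotation of $g$, so by the first paragraph $w$ never occurs as a subword of $g^n$ and $C_w(g^n) = 0$. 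The identical analysis applied to $w^{-1}$ shows $C_{w^{-1}}(g^n) \in \set{n-1,n}$ if $g$ is conjugate to $w^{-1}$, and $C_{w^{-1}}(g^n) = 0$ otherwise.

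It remains to feed these estimates into $\phi_w(g) = \lim_{n\to\infty} \Phi_w(g^n)/n = \lim_{n\to\infty} \bigl(C_w(g^n) - C_{w^{-1}}(g^n)\bigr)/n$. The point that makes the trichotomy consistent, and the step I expect to require the most care, is the vanishing of the cross term: when $g \sim w$ I must know that $w^{-1}$ does not appear in $g^n$ at all. This rests on the fact that in a free group no nontrivial element is conjugate to its inverse, so $w \not\sim w^{-1}$ in \F and hence $g$ cannot be conjugate to both $w$ and $w^{-1}$. Granting this, when $g$ is conjugate to $w$ the first count is $n + O(1)$ while the second vanishes, giving $\phi_w(g) = 1$; when $g$ is conjugate to $w^{-1}$ the roles reverse and $\phi_w(g) = -1$; and when $g$ is conjugate to neither, both counts vanish and $\phi_w(g) = 0$. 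The remaining ingredients are routine boundary counting.
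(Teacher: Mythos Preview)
Your argument is correct. The paper declares this lemma immediate and gives no proof; your direct computation of $C_w(g^n)$ and $C_{w^{-1}}(g^n)$ via the $\ell$--periodicity of the reduced word $g^n$, together with the standard fact that $w \not\sim w^{-1}$ in a free group, is precisely the routine verification the paper omits.
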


  The following proposition, together with \thmref{Bavard} and the fact
  that $D_{\phi_w} \le 4$ for any reduced word $w \in \F$, yields
  \thmref{Chains}.
  
  \begin{proposition} \label{Prop:Chain} 

    For any finite sequence $g_1,\ldots,g_n \in \F$, if $\sum_i g_i$ is not
    the zero chain in $\overline{C}_1(\F)$, then there is an effective word
    $w \in \F$ with $\sum_i \phi_w \left( g_i \right) \ge 1$.

  \end{proposition}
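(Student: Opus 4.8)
The plan is to exploit that each $\phi_w$ descends to $\overline{C}_1(\F)$, so that $\sum_i \phi_w(g_i)$ depends only on the class of $\sum_i g_i$ and may be computed on a convenient representative. The first step is to put the chain into canonical form. Writing each nontrivial $g_i$ as $g_i = u_i^{m_i}$ with $u_i$ indivisible and $m_i \ge 1$, the relations $g^n - ng$ give $g_i = m_i u_i$ in $\overline{C}_1(\F)$, and the relation $g - hgh^{-1}$ lets me replace each $u_i$ by its unique effective representative. Using the case $n=-1$, namely $g + g^{-1} = 0$, I can further combine each inverse pair of conjugacy classes; note that a self-paired class (one with $u \sim u^{-1}$) satisfies $[u] = -[u]$ and hence is $0$ in the real vector space $\overline{C}_1(\F)$. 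The upshot is an identity $\sum_i g_i = \sum_{w \in S} N_w\, w$ in $\overline{C}_1(\F)$, where $S$ is a set of effective words containing exactly one representative of each inverse pair, $N_w \in \Z$, and—by the standard description of $\overline{C}_1(\F)$ as the span of conjugacy classes modulo $[g^{-1}] = -[g]$ and $[g^n] = n[g]$—the classes $\{[w]\}_{w \in S}$ are linearly independent. Thus $\sum_i g_i \ne 0$ forces some $N_w \ne 0$.

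Next I would select a witness. Let $L$ be the largest length $|w|$ over those $w \in S$ with $N_w \ne 0$, and fix such a $w_0$ of length $L$; write $w_0'$ for the effective representative of the conjugacy class of $w_0^{-1}$, which also has length $L$. I then evaluate $\sum_{w \in S} N_w\, \phi_{w_0}(w)$ term by term. For $w \in S$ with $|w| < L$ we have $|w|_C = |w| < |w_0|$, so $\phi_{w_0}(w) = 0$ by \lemref{Effective}. For $w \in S$ with $|w| = L$ we have $|w|_C = |w_0|$, so \lemref{Equal} applies, and since effective words are the unique effective representatives of their conjugacy classes, $\phi_{w_0}(w) = 1$ exactly when $w = w_0$, while $\phi_{w_0}(w) = -1$ would require $w \sim w_0^{-1}$, i.e. $w = w_0'$, which lies outside $S$. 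Hence every term vanishes except $w = w_0$, giving $\sum_i \phi_{w_0}(g_i) = N_{w_0}$.

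Finally, if $N_{w_0} \ge 1$ we are done. If instead $N_{w_0} \le -1$, the same computation with $w_0'$ in place of $w_0$ yields $\sum_i \phi_{w_0'}(g_i) = -N_{w_0} \ge 1$: all shorter terms again vanish by \lemref{Effective}, and among the length-$L$ terms only $w = w_0$ survives, now contributing $-1$ via \lemref{Equal} since $(w_0')^{-1} \sim w_0$. Either way we produce an effective word whose homogeneous counting quasimorphism evaluates to at least $1$ on $\sum_i g_i$, which is exactly the assertion of \propref{Chain}.

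I expect the main obstacle to lie in the first paragraph: one must pass to the canonical form in $\overline{C}_1(\F)$ \emph{before} choosing the witness, precisely because a homogeneous quasimorphism is odd, so a long word together with its inverse contributes zero and naively taking ``the longest word appearing'' need not detect the chain. Combining inverse pairs and discarding self-inverse classes is what guarantees that the top-length surviving coefficient $N_{w_0}$ is genuinely nonzero and is exactly the quantity that \lemref{Effective} and \lemref{Equal} isolate; verifying that these reductions are legitimate (and that the resulting classes are independent) is the only place where real care is needed, the remaining steps being direct applications of the two lemmas.
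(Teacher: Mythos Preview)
Your argument is correct and follows essentially the same approach as the paper: rewrite the chain in $\overline{C}_1(\F)$ as a combination of effective words with no two related by conjugacy or inverse-conjugacy, then apply \lemref{Effective} and \lemref{Equal} to an effective word of maximal length appearing with nonzero coefficient. The only cosmetic difference is that the paper arranges all coefficients to be positive during the reduction step (by choosing the appropriate sign of each $w_j$), whereas you allow $N_w \in \Z$ and handle a possible negative sign at the end by replacing $w_0$ with the effective representative $w_0'$ of $w_0^{-1}$.
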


  \begin{proof}

    Suppose the chain $c=\sum_i g_i$ is nonzero in $\overline{C}_1(\F)$.
    For each $i$, set $g_i = w_i^{m_i}$, where $w_i$ is indivisible and
    $m_i \ge 1$. For $j \ne k$, if $w_j = tw_kt^{-1}$, then $g_j + g_k \equiv
    g_j + tg_kt^{-1} \equiv (m_j+m_k) w_j$ in $\overline{C}_1(\F)$. Similarly,
    if $w^{-1}_j = t w_k t^{-1}$ and $m_j > m_k$, then $g_j + g_k \equiv (m_j -
    m_k) w_j$. Therefore, by applying these equivalence relations whenever
    possible, we may assume $c \equiv \sum_j m_j w_j$, where $w_j$ is not
    conjugate to $w_k$ or $w^{-1}_k$ whenever $j \ne k$ and that each $m_j >
    0$. Finally, by replacing $g_j$ by a conjugate if necessary, we will
    assume each $w_j$ is effective.  
    
    By reordering the indices if necessary, we may assume $|w_1| \ge |w_2|
    \ge \cdots \ge |w_n|$. Let $1 \le k \le n$ be the largest index such
    that $|w_1| = |w_2| = \cdots =|w_k|$. For each $i$, set $\phi_i =
    \phi_{w_i}$. By definition, if $k < n$, then $|w_k| > |w_{k+1}| \ge
    \cdots \ge |w_n|$. Therefore, by \lemref{Effective}, for all $1 \le i
    \le k$ and all $j > k$, $\phi_i(w_j) = 0$. Also, by \lemref{Equal}, for
    all $1 \le i,j \le k$, $\phi_i(w_j) = 0$ if $i \ne j$, and $\phi_i(w_i)
    = 1$. Hence, for each $1 \le i \le k$, \[ \phi_i(c) = \sum_j m_j
    \phi_i(w_j) = m_i  \ge 1. \] Thus choosing $w=w_i$ for any $1 \le i \le
    k$ gives the statement of the proposition. \qedhere 
  
  \end{proof}

  We remark that the proof also shows that for any nonzero chain $c=\sum_i
  r_i g_i \in \overline{C}_1(\F)$, there is an effective word $w \in \F$
  with $\phi_w(c) > 0$. 
  
  \begin{proof}[Proof of \corref{VirtuallyFree}] 
  
    Let $G$ be a virtually free group with a free subgroup $H$ of index
    $d$. By applying the finite index formula (\thmref{Formula}) and
    \thmref{Chains} to $H$, we obtain that if $\scl_G(g) > 0$, then
    $\scl(g) \ge 1/8d$. It remains to show that $\scl_G(g)=0$ if and only
    if some power of $g$ is conjugate to its inverse. Though this follows
    from known results (\cite{EF97} and \cite{CF10}), we will give an
    independent proof that depends only on our results on chains.
    
    If some power of $g$ is conjugate to its inverse then $\scl_G(g)=0$
    follows from homogeneity. Conversely, suppose $\scl_G(g)=0$. Let $N
    \triangleleft G $ be a normal subgroup of finite index contained in
    $H$. The group $N$ is is free, being a subgroup of a free group, so fix
    a well-ordered, symmetric, free generating set for $N$. Applying the
    finite-index formula to $N$, we obtain that 
    \[ \scl_G(g) \ = \ \frac{1}{[G:N]} \scl_N \left(\sum_{i=1}^k
    h_i\right), \] where each $h_i \in N$ is conjugate (in G) to some power
    of $g$. Since $N$ is normal, each $h_i$ is conjugate to the same power
    of $g$. Therefore, it suffices to find a pair $h_i$ and $h_j$ such that
    $h_i$ is conjugate (in $N$) to $h^{-1}_j$. By \thmref{Chains},
    $\scl_G(g) = 0$ if and only $\sum_i h_i$ is trivial in
    $\overline{C}_1(N)$. For each $i$, there exists an effective word $w_i
    \in N$ and $m_i > 0$ such that $h_i \equiv m_i w_i$ in
    $\overline{C}_1(N)$. By reordering the indices if necessary, we may
    assume $|w_1| \ge |w_i|$ for all $i$. If no $w_j$ is conjugate (in $N$)
    to $w^{-1}_1$, then by \lemref{Effective} and \lemref{Equal},
    $\sum_i \phi_{w_1}(h_i) = m_1 > 0$, contradicting $\scl_G(g) =0$.
    Therefore, $w_1$ must be conjugate to the inverse of $w_j$ for some
    $j$. Since $N$ is normal, the conjugacy lengths $|h_i|_C$ are all the
    same. Since each $w_i$ is cyclically reduced, $|h_i|_C = m_i |w_i|$.
    Therefore, $m_1 |w_1| = m_j |w_j|$ which implies $m_1 = m_j$. This
    shows that $h_1$ is conjugate to the inverse of $h_j$ as desired.
    \qedhere

  \end{proof}
  
  \subsection*{Basis}
  
  Let $\widehat{B}$ be the set of effective words of \F. Since an effective
  representative is unique for each conjugacy class, distinct elements of
  $\widehat{B}$ represent distinct conjugacy classes. We decompose
  $\widehat{B} = B \sqcup \overline{B}$, such that $B$ contains the
  effective representative of $g$ if and only if $\overline{B}$ contains
  the effective representative of $g^{-1}$. Note that $B$ forms a basis for
  $\overline{C}_1(\F)$. We impose a well ordering $<$ on $B$ (not the
  lexicographical order) as follows. For $w, w' \in B$, if $|w| < |w'|$,
  then $w<w'$; if $|w| = |w'|$, then set $w < w'$ if $w$ is smaller
  lexicographically. Let $\basis = \set{ \phi_w \st w \in B}$. We show

  \begin{lemma} \label{Lem:Finite}
    
    Only finitely many elements of \basis do not vanish on any nontrivial
    element $g \in \F$.
    
  \end{lemma}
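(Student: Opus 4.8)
The plan is to combine two independent constraints on an effective word $w \in B$ for which $\phi_w(g) \neq 0$: a bound on its length, and a restriction on which generators it may involve. Together these confine $w$ to a finite set. First I would use that $\phi_w$ is a homogeneous quasimorphism, hence a class function, to replace $g$ by a cyclically reduced representative of its conjugacy class; this costs nothing and gives $|g| = |g|_C$. The length bound is then immediate from \lemref{Effective}: since every $w \in B$ is effective, that lemma says $\phi_w(g) = 0$ whenever $|g|_C < |w|$. Taking the contrapositive, any $w$ with $\phi_w(g) \neq 0$ must satisfy $|w| \le |g|_C$.

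In finite rank this already finishes the argument, since there are only finitely many reduced words of length at most $|g|_C$. The point requiring genuine care is that \F may have \emph{infinite} rank, so the length bound alone leaves infinitely many candidate words (one can vary the generators freely). To handle this I would impose the second constraint. The element $g$, being a single word, involves only finitely many generators; let $S$ be this finite set. Because $g$ is cyclically reduced, every power $g^n$ is already reduced and its reduced representative uses only letters from $S \cup S^{-1}$. Consequently, if $w$ contains any generator outside $S$, then neither $w$ nor $w^{-1}$ can occur as a subword of any $g^n$, so $C_w(g^n) = C_{w^{-1}}(g^n) = 0$ for all $n$, whence $\Phi_w(g^n) = 0$ and therefore $\phi_w(g) = 0$. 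Thus $\phi_w(g) \neq 0$ forces $w$ to be supported on the finite alphabet $S \cup S^{-1}$.

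Combining the two observations, any $w \in B$ with $\phi_w(g) \neq 0$ is a reduced word of length at most $|g|_C$ in the finite alphabet $S \cup S^{-1}$, and there are only finitely many such words. Hence only finitely many elements of $\basis$ fail to vanish at $g$, as claimed. I expect the only subtle step to be the infinite-rank bookkeeping in the second paragraph, specifically the use of cyclic reducedness to guarantee that $g^n$ stays reduced and confined to the alphabet $S \cup S^{-1}$; everything else is a direct application of \lemref{Effective} together with a counting of bounded-length words over a finite alphabet.
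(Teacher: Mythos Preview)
Your proof is correct and essentially parallels the paper's: both reduce to a cyclically reduced $g$, invoke \lemref{Effective} to dispose of all $w$ with $|w| > |g|_C$, and then give an elementary finiteness argument for the remaining short $w$. The only difference is in that last step --- the paper observes that any short $w$ with $\phi_w(g) \ne 0$ must occur (up to inversion) as a subword of $g^2$, which is automatically a finite set even in infinite rank, whereas you instead restrict the alphabet of $w$ to the finitely many generators appearing in $g$ and count bounded-length words over that finite alphabet; both arguments handle the infinite-rank case, yours explicitly and the paper's implicitly.
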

  
  \begin{proof}
    
    For any nontrivial element $g \in \F$, there is a unique element in $B$
    that has a power conjugate to $g$ or $g^{-1}$. Thus, by homogeneity, it is
    enough to show the statement for $g \in B$. For any reduced word $w$
    with $|w| < |g|$, if $\phi_w(g) \ne 0$, then $w$ must be a subword of
    $g^2$. Since there are only finitely many subwords in $g^2$, $\phi_w(g)
    = 0$ for all but finitely many $w \in B$ with $|w| < |g|$. Now by
    \lemref{Effective} and \lemref{Equal}, with the exception of $w=g$, any
    $w \in B$ with $|w| \ge |g|$ has $\phi_{w} (g) = 0$. The conclusion
    follows. \qedhere

  \end{proof}

  Let \class be the real vector space of all homogeneous class functions on
  \F. \thmref{Basis} is immediate from the following proposition. 
  
  \begin{proposition} \label{Prop:Basis}
    
    Any $\phi \in \class$ can be represented by $\phi = \sum_{w \in B} r_w
    \phi_w$, for some $\set{r_w} \subset \R$. Further, \basis is linearly
    independent.

  \end{proposition}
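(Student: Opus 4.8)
The plan is to prove the two assertions—spanning and linear independence—by exploiting the triangular structure that Lemma \ref{Lem:Finite} together with Lemmas \ref{Lem:Effective} and \ref{Lem:Equal} impose on the values $\phi_w(g)$ for $w, g \in B$. The key observation is that, with respect to the well-ordering $<$ on $B$, the ``matrix'' $\bigl(\phi_w(g)\bigr)_{w,g \in B}$ is lower- (or upper-) triangular with ones on the diagonal: by Lemma \ref{Lem:Equal}, $\phi_w(w) = 1$, and if $\phi_w(g) \ne 0$ with $w \ne g$, then Lemma \ref{Lem:Effective} forces $|g|_C \ge |w|$, so that $w$ cannot exceed $g$ in the ordering unless $|w|=|g|$, in which case Lemma \ref{Lem:Equal} again rules out $\phi_w(g) \ne 0$ for $w \ne g$ (since $w, g \in B$ are not conjugate and $g$ is not conjugate to $w^{-1}$). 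Thus $\phi_w(g) = 0$ whenever $g < w$, and $\phi_w(w) = 1$.

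For spanning, I would define the coefficients $r_w$ recursively along the well-ordering. Given $\phi \in \class$, I set $r_w = \phi(w) - \sum_{w' < w} r_{w'} \phi_{w'}(w)$; the sum is finite by Lemma \ref{Lem:Finite}, since only finitely many $\phi_{w'}$ fail to vanish on the fixed element $w$. By construction this choice guarantees $\sum_{w' \le w} r_{w'} \phi_{w'}(w) = \phi(w)$, and since $\phi_{w'}(w) = 0$ for $w' > w$, we get that $\psi := \sum_{w \in B} r_w \phi_w$ agrees with $\phi$ on every $w \in B$. (Here the infinite sum is well-defined pointwise because, for each fixed $g$, only finitely many terms are nonzero by Lemma \ref{Lem:Finite}.) It then remains to promote agreement on $B$ to agreement on all of \F: both $\phi$ and $\psi$ are homogeneous class functions, and every nontrivial $g \in \F$ has a power conjugate to a power of a unique effective representative in $B$ (or its inverse), so homogeneity and the class-function property propagate the equality $\phi = \psi$ from $B$ to all of \F. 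One should check that $\psi$ is genuinely a homogeneous quasimorphism, i.e.\ that the (possibly infinite) sum has finite defect; this follows because locally—on any pair $g,h$—only finitely many summands contribute.

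For linear independence, suppose $\sum_{w \in B} r_w \phi_w = 0$ with not all $r_w$ zero, and let $w_0$ be the $<$-least element with $r_{w_0} \ne 0$. Evaluating at $g = w_0$ and using the triangular structure—$\phi_{w_0}(w_0) = 1$ and $\phi_{w}(w_0) = 0$ for all $w < w_0$—yields $0 = \sum_w r_w \phi_w(w_0) = r_{w_0}$, a contradiction. The main obstacle I anticipate is not the triangularity argument itself, which is clean, but rather the bookkeeping needed to justify that the infinite sums converge pointwise and define a bona fide homogeneous quasimorphism; Lemma \ref{Lem:Finite} is precisely the tool that controls this, so the real work is in citing it carefully at each step and confirming that the recursive definition of the $r_w$ is consistent with the ordering (no circularity, each $r_w$ depending only on strictly earlier coefficients).
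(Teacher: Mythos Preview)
Your approach is essentially the paper's: define the coefficients by transfinite recursion along the well-order on $B$, then verify equality on $B$ using the triangular structure coming from Lemmas~\ref{Lem:Effective} and~\ref{Lem:Equal}, and deduce linear independence from that same triangularity. Your linear-independence argument (take the $<$-least $w_0$ with $r_{w_0}\ne 0$ and evaluate at $w_0$) is in fact a bit cleaner than the paper's.

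Two small corrections. First, in your linear-independence paragraph you wrote ``$\phi_w(w_0)=0$ for all $w<w_0$,'' but the triangular structure you established gives this for $w>w_0$; the terms with $w<w_0$ vanish because $r_w=0$ by minimality of $w_0$, not because of triangularity. Second, your aside that one must check $\psi$ has finite defect is unnecessary: the proposition concerns $\phi\in\class$, the space of homogeneous \emph{class functions}, so it suffices that $\psi$ is a homogeneous class function, which follows immediately from the pointwise-finiteness of the sum. (Your proposed justification---finitely many summands on each pair $(g,h)$---would not in any case bound the defect uniformly, and indeed the example at the end of the paper shows that $\psi$ need not have summable coefficients.)
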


  \begin{proof}
  
    We start with the first statement. 
    For any $w \in B$, denote by $S_w = \set{ w' \in B \st w' < w }$. We
    now use transfinite induction to define $r_w$ for all $w \in B$. For
    the base case, let $w_0 \in B$ be the minimal element and set $r_{w_0}
    = \phi(w_0)$. Now for any $w \in B$, suppose $r_{w'}$ is defined for all
    $w' \in S_w$. Set
    \[ r_w = \phi(w) - \sum_{w' \in S_w} r_{w'} \phi_{w'}(w). \]
    By \lemref{Finite}, the sum in the definition of $r_w$ is well defined
    since there are only finitely many nonzero terms. This defines $r_w$
    for all $w \in B$. To see $\phi = \sum_w r_w \phi_w$, by homogeneity it
    is enough to show that the two sides agree on every element in $B$. By
    \lemref{Effective} and \lemref{Equal}, for any $w,g \in B$ with $w \ge
    g$, $\phi_w(g) = 1$ if and only if $w=g$. Therefore, 
    \[
    \sum_w r_w \phi_w(g) = \sum_{w \in S_g} r_w \phi_w(g) + r_g = \phi(g).
    \] 
    We now show linearly independence of \basis. Suppose there exists
    $\phi_w \in \basis$ which is in the span of $\basis \setminus
    \set{\phi_w}$. 
    We first show that $r_{w'}=0$ for all $w' \ne w$ with $|w'| \le |w|$.
    Fix $k \le |w|$ suppose by induction $r_{w'} = 0$ for all $|w'| < k$.
    Write 
    \[ 
      \phi_w 
      = \sum_{\substack{|w'| \ge k\\ w' \ne w}} r_{w'} \phi_{w'}.
    \]
    Now let $g \in B \setminus \set{w}$ with $|g| = k \le |w|$. By
    \lemref{Effective} and \lemref{Equal}, for
    any $|w'| \ge |g|$, $\phi_{w'}(g)=1$ if and only if $w'=g$ and is
    $0$ for any other case. Therefore, evaluating on $g$, we obtain
    \begin{align*}
    0 = \phi_w(g) 
    & = r_g \phi_g(g)+ \sum_{\substack{|w'| \ge k\\ w' \ne w, g}} r_{w'}
    \phi_{w'}(g) = r_g. 
    \end{align*} 
    This shows $r_{w'}=0$ for all $|w'| \le |w|$. We now have $\phi_w =
    \sum_{|w'| > |w|} r_{w'} \phi_{w'}$. Now plug in $w$ into the
    two sides, we obtain 
    \[ 1 = \phi_w(w) = \sum_{|w'| > |w|} r_{w'} \phi_{w'}(w) = 0. \] 
    This finishes the proof.
  \end{proof}
  
  \begin{corollary}
    
    Let \F be a countable free group. Then the set of class functions with
    rational coefficients is dense in \class in the operator norm; that is,
    for any $\phi \in \class$ and $\epsilon > 0$, there are rational
    numbers $\set{q_w}_{w\in B} \subset \Q$ such that 
    \[ \frac{\abs{ \phi(g) - \sum_w q_w \phi_w(g)}}{|g|} <
    \epsilon, \quad \forall g \in \F\setminus \set{\mathrm{id}}. \]
    Consequentially, the set of homogeneous quasimorphisms with rational
    coefficients is dense in the vector space of homogeneous
    quasimorphisms (in the operator norm or in the defect norm).

  \end{corollary}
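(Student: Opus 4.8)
The plan is to start from the exact real-coefficient expansion $\phi=\sum_{w\in B}r_w\phi_w$ provided by \propref{Basis} and to replace each $r_w$ by a nearby rational $q_w$, arranging that the total coefficient error is absolutely summable, so that a single choice of rationals works uniformly at every group element and in both norms at once. The one input I would establish first is a uniform bound on the size of each basis quasimorphism, namely
\[ \abs{\phi_w(g)} \le \frac{|g|}{|w|} \qquad \text{for every nontrivial } g\in\F. \]
To see this, note the reduced representative of $g^n$ has length at most $n|g|$, hence contains at most $n|g|/|w|$ pairwise disjoint copies of $w$, and likewise of $w^{-1}$; therefore $\abs{\Phi_w(g^n)}\le n|g|/|w|$, and dividing by $n$ and passing to the limit yields the bound for the homogenization $\phi_w$. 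The decay in $|w|$ is the feature that makes the perturbation converge, although for density even the cruder bound $\abs{\phi_w(g)}\le|g|$ would suffice.

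Next I would use that \F, hence $B$, is countable, and enumerate $B=\set{w_1,w_2,\dots}$. Fix $\phi\in\class$ and $\epsilon>0$. Since \Q is dense in \R, I can choose $q_{w_j}\in\Q$ with $\abs{r_{w_j}-q_{w_j}}<\epsilon\,2^{-j}/8$, so that $\sum_{w\in B}\abs{r_w-q_w}<\epsilon/8$. Put $\psi=\sum_{w\in B}q_w\phi_w$; this is a well-defined class function because, by \lemref{Finite}, at any given $g$ only finitely many $\phi_w$ are nonzero. For such $g$, combining the displayed bound with $|w|\ge1$ gives
\[ \frac{\abs{\phi(g)-\psi(g)}}{|g|} \le \sum_{w\in B}\abs{r_w-q_w}\,\frac{\abs{\phi_w(g)}}{|g|} \le \sum_{w\in B}\frac{\abs{r_w-q_w}}{|w|} \le \sum_{w\in B}\abs{r_w-q_w} < \epsilon, \]
which is exactly the claimed operator-norm density of rational class functions; replacing $1/|w|$ by $1$ at the last step is what frees the estimate from having to track which $w$ actually see $g$.

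For the concluding clause I would keep the identical construction and simply observe that the same summability bound controls the defect. Assume now that $\phi$ is a homogeneous quasimorphism. For any $g,h\in\F$ only finitely many terms are nonzero, and since $D_{\phi_w}\le4$,
\[ \abs{(\phi-\psi)(gh)-(\phi-\psi)(g)-(\phi-\psi)(h)} \le \sum_{w\in B}\abs{r_w-q_w}\,D_{\phi_w} \le 4\sum_{w\in B}\abs{r_w-q_w} < \epsilon. \]
Thus $\phi-\psi$ is itself a homogeneous quasimorphism of defect at most $\epsilon$, so $\psi=\phi-(\phi-\psi)$ is a homogeneous quasimorphism whose basis coefficients, unique by the independence half of \propref{Basis}, are the rationals $q_w$; the two displays then give density in the operator norm and in the defect norm simultaneously. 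I expect the only real point to get right is the uniform bound $\abs{\phi_w(g)}\le|g|/|w|$ together with the absolute summability of the coefficient errors, since it is precisely this that lets one fixed choice of rationals approximate $\phi$ at all of the infinitely many $g$ at once and in both norms, sidestepping any direct comparison between the operator and defect norms.
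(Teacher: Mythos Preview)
Your proof is correct and follows essentially the same approach as the paper: expand $\phi$ via \propref{Basis}, perturb each real coefficient to a nearby rational so that the total error $\sum_w|r_w-q_w|$ is small, and then control the operator norm using $|\phi_w(g)|\le|g|$ and the defect using $D_{\phi_w}\le4$. Your sharper bound $|\phi_w(g)|\le|g|/|w|$ and the explicit $2^{-j}$ enumeration are harmless refinements of the paper's argument, not genuine departures from it.
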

  
  \begin{proof}

    Since \F is countable, so is $B$. Let $ \phi = \sum_w r_w \phi_w \in
    \class$ be arbitrary and let $\epsilon >0$ be given. We can choose
    rational numbers $\set{q_w}_{w \in B} \subset \Q$ such that $\sum_w
    |r_w-q_w| < \epsilon$. For any $g \in \F$, $|\phi_w(g)| \le |g|$.
    Therefore
    \[ \abs{ \sum_w \phi_w(g) - \sum_w q_w \phi_w(g)} \le \sum_w
    \abs{r_w-q_w} \abs{g} < \epsilon \abs{g}. \] For the second statement,
    recall that the defect of each $\phi_w$ is at most $4$.
    Thus, if $\phi=\sum_w r_w \phi_w$ is a quasimorphism, then we also have
    that for $\psi= \phi-\sum_w q_w \phi_w$ and for all $g, h \in \F$, 
    \[ \abs{\psi(gh)-\psi(g)-\psi(h)} \le \sum_w |r_w-q_w|
    \abs{\phi_w(gh)-\phi_w(g)-\phi_w(h)} \le 4\epsilon. \] This shows
    $\sum_w q_w \phi_w$ is bounded in the defect norm from $\phi$, hence it
    is a quasimorphism. \qedhere
    
  \end{proof}

  We observe that since each $\phi_w$ has defect at most $4$, if $\sum_w
  |r_w| < \infty$, then $\phi=\sum_w r_w \phi_w$ is a quasimorphism with
  defect at most $4\sum_w |r_w| < \infty$. However, a quasimorphism may
  have non-summable coefficients as illustrated by the following example. 
  
  Let $\F_2=\gen{a,b}$ be the free group of rank $2$. Order its generators
  by $a < a^{-1} < b < b^{-1}$. Choose the set $B$ to contain
  the words $a$ and $a^n b$ for all $n \ge 0$.
  
  For any $g \in \F_2$, represent $g$ in reduced form by $g=
  a^{p_1}b^{q_1}\cdots a^{p_k}b^{q_k}$, where all the exponents are
  non-zero integers except for possibly $p_1$ or $q_k$. Define $\Phi(g)$ to
  be the number of positive even exponents minus the number of negative
  even exponents in the expression for $g$. The map $\Phi$ is a
  quasimorphism by \cite{Rol09}. Let $\phi$ be its homogenization and set
  $\phi = \sum_{w \in B} r_w \phi_w$. We will show by induction that for
  all $n \ge 2$, $r_{a^nb} = (-1)^n$. This immediately implies that
  $\sum_{w\in B} |r_w|$ is not summable. 
  
  First note that \[ \phi(a) = \phi(b) = \phi(ab) = 0 \quad \Longrightarrow
  \quad r_a = r_b = r_{ab} = 0.\] Also, for any $w \in B$, $\phi_w(a^nb)
  \ne 0$ if and only if $w=a$ or $w=a^mb$ for $0 \le m \le n$. In the
  latter case, $\phi_{a^mb}(a^nb) = 1$. Thus, for all $n \ge 2$, we have 
  \begin{align*}
    r_{a^n b} 
    & = \phi(a^n b) - \sum_{w' < a^n b} r_{w'} \phi_{w'}(a^nb) 
    %& = \phi(a^n b) - \left( r_a \phi_a(a^nb) + r_b \phi_b(a^nb) + r_{ab}
    %\phi_{ab}(a^nb) + \sum_{m=2}^{n-1} r_{a^mb} \right) \\ 
    = \phi(a^n b) - \sum_{m=2}^{n-1} r_{a^mb} 
  \end{align*}
  This establishes the base case $r_{a^2b} = \phi(a^2b) = 1$.  We now
  suppose $n > 2$ and for all $2 \le m < n$, $r_{a^mb} = (-1)^m$.
  Therefore, \[ r_{a^nb} = \phi(a^n b) - \sum_{m=2}^{n-1} (-1)^m.\] If $n$
  is odd, then $\phi(a^n b) = 0$, and $\sum_{m=2}^{n-1} (-1)^m =1$. If $n$
  is even, then $\phi(a^n b) = 1$, and $\sum_{m=2}^{n-1} (-1)^m = 0$. This
  yields $r_{a^nb} = (-1)^n$ for all $n \ge 2$ as desired.

%section{Bibliography}
 
  \bibliographystyle{alpha} \bibliography{main}

  \end{document}